\DeclareMathOperator{\GL}{GL}
\DeclareMathOperator{\syc}{SC}
\newcommand*{\pmat}[1]{\begin{bmatrix}#1\end{bmatrix}}
\DeclareMathOperator{\Ann}{Ann}
\DeclareMathOperator{\gl}{GL}
\DeclareMathOperator{\rank}{Rank}
\DeclareMathOperator{\syp}{Sp}
\let\phi\varphi
\theoremstyle{plain}
\newtheorem{theorem}{Theorem}[section]
\newtheorem{proposition}[theorem]{Proposition}
\theoremstyle{remark}
\newtheorem{remark}[theorem]{Remark}
\theoremstyle{definition}
\newtheorem{definition}[theorem]{Definition}
\newtheorem{example}[theorem]{Example}
\newtheorem*{notation*}{Notation}
\lstdefinelanguage{GAP}{%
 morekeywords={%
 Assert,Info,IsBound,QUIT,%
 TryNextMethod,Unbind,and,break,%
 continue,do,elif,%
 else,end,false,fi,for,%
 function,if,in,local,%
 mod,not,od,or,%
 quit,rec,repeat,return,%
 then,true,until,while%
 },%
 sensitive,%
 morecomment=[l]\#,%
 morestring=[b]",%
 morestring=[b]',%
}[keywords,comments,strings]
\definecolor{codegreen}{rgb}{0,0.6,0}
\definecolor{codegray}{rgb}{0.5,0.5,0.5}
\definecolor{codepurple}{rgb}{0.58,0,0.82}
\definecolor{backcolour}{rgb}{0.95,0.95,0.92}
\lstdefinestyle{mystyle}{
	backgroundcolor=\color{backcolour},   
	commentstyle=\color{codegreen},
	keywordstyle=\color{magenta},
	numberstyle=\tiny\color{codegray},
	stringstyle=\color{codepurple},
	basicstyle=\ttfamily\footnotesize,
	breakatwhitespace=false,         
	breaklines=true,                 
	captionpos=b,                    
	keepspaces=true,                 
	%numbers=left,                    
	numbersep=5pt,                  
	showspaces=false,                
	showstringspaces=false,
	showtabs=false,                  
	tabsize=2
}
\definecolor{codegreen}{rgb}{0,0.6,0}
\definecolor{codegray}{rgb}{0.5,0.5,0.5}
\definecolor{codepurple}{rgb}{0.58,0,0.82}
\definecolor{backcolour}{rgb}{0.95,0.95,0.92}
\lstdefinestyle{mystyle}{
	backgroundcolor=\color{backcolour},   
	commentstyle=\color{codegreen},
	keywordstyle=\color{magenta},
	numberstyle=\tiny\color{codegray},
	stringstyle=\color{codepurple},
	basicstyle=\ttfamily\footnotesize,
	breakatwhitespace=false,         
	breaklines=true,                 
	captionpos=b,                    
	keepspaces=true,                 
	%numbers=left,                    
	numbersep=5pt,                  
	showspaces=false,                
	showstringspaces=false,
	showtabs=false,                  
	tabsize=2
}
\DeclareMathAlphabet{\mbb}{U}{bbold}{m}{n}
\begin{document}

\title{Classification of small binary bibraces via bilinear maps} 
\author[R.~Civino]{Roberto Civino 
\orcidlink{0000-0003-3672-8485}
}
\author[V.~Fedele]{Valerio Fedele}

\address{DISIM \\
 Universit\`a degli Studi dell'Aquila\\
 via Vetoio\\
 67100 Coppito (AQ)\\
 Italy}       

\email{roberto.civino@univaq.it}
\email{valerio.fedele@guaduate.univaq.it}

\date{} %\thanks{}

\subjclass[2010]{16T25, 08A35, 94A60} \keywords{Binary bibraces; alternating algebras; elementary abelian regular subgroups; differential cryptanalysis}

\begin{abstract}
We classify small binary bibraces, using the correspondence with alternating algebras over the field $\mathbb F_2$, up to dimension eight, also determining their isomorphism classes. These finite-dimensional algebras, defined by an alternating bilinear multiplication and nilpotency of class two, can be represented by subspaces of skew-symmetric matrices, with classification corresponding to $\GL(m, \mathbb F_2)$-orbits under congruence. Our approach combines theoretical invariants, such as rank sequences and the identification of primitive algebras, with computational methods implemented in Magma. These results also count the number of possible alternative operations that can be used in differential cryptanalysis.
\end{abstract}

\maketitle

\section{Introduction}

The study of algebraic structures related to braces and their generalisations has seen rapid development in recent years, largely due to their central role in the theory of set-theoretic solutions of the Yang--Baxter equation. Since the seminal works of Rump~\cite{Ru07}, and Guarnieri and Vendramin~\cite{GV16}, it has become clear that braces provide a natural framework for constructing and analysing such solutions. In this context, binary bibraces represent a particular family of structures that can be described equivalently in terms of binary alternating algebras over $\mathbb{F}_2$, that is, finite-dimensional algebras with alternating bilinear multiplication and nilpotency of class two~\cite{civino2025binary}. This correspondence allows one to turn combinatorial problems on bibraces into linear-algebraic ones, specifically into the study of subspaces of skew-symmetric matrices over $\mathbb{F}_2$ up to congruence.

The classification of these algebras is not only of independent algebraic interest but also motivated by applications. In fact, the problem of counting the number of admissible binary alternating algebras has been mentioned in different works~\cite{brunetta2019hidden,calderini2021properties}, both in the algebraic and in the cryptographic contexts, as a challenging enumeration problem. Similar enumeration problems have been considered in other related algebraic settings as well, for instance on counting and classification of Hopf--Galois structures~\cite{byott2007hopf,MR4047559,darlington2025algorithm,darlington2025hopf}. Furthermore, even if not explicitly mentioned in the remainder of this paper, our results provide in particular a counting argument for elementary abelian subgroups of the affine group that mutually normalise with the translation group. This interpretation follows from the equivalence between binary alternating algebras and certain alternative group operations defined on vector spaces over~$\mathbb{F}_2$, and highlights an additional group-theoretic contribution of our classification~\cite{civino2025binary}.

In cryptographic applications, alternative group operations derived from bibraces have been employed in the study of differential attacks~\cite{biham1991differential,wagner2004towards}, where they can replace the usual XOR operation in the propagation of differences~\cite{civino2019differential,baudrin2023commutative,calderini2024differential,calderini2024optimal,baudrin2025commutative}. While this connection will be mentioned only briefly here, it provides a concrete motivation: the parameters of these algebras quantify the number of admissible alternative operations, and therefore the extent of possible attack strategies. In particular, the cases of dimension four and eight are especially relevant, since they cover the most common sizes of S-boxes used in block ciphers~\cite{biham1998serpent,daemen2002design,bogdanov2007present}.

The main difficulty of the classification problem lies in the utter combinatorial explosion of possibilities as the dimension increases. The problem translates into understanding the orbits of the action of $\mathrm{GL}(m, \mathbb{F}_2)$ on $d$-dimensional subspaces of the space of $m \times m$ skew-symmetric matrices. While conceptually straightforward, this orbit problem is computationally prohibitive: the number of subspaces grows extremely rapidly, and direct computation quickly becomes infeasible. For example, already in dimension eight, the number of candidate structures exceeds billions, and naive orbit enumeration clashes with the complexity of the problem.

To overcome these difficulties, we employ a twofold strategy. On the one hand, we use algebraic invariants, such as rank sequences of matrix spaces, to partition the set of subspaces into coarser equivalence classes, thereby reducing the search space. On the other hand, we use the computational algebra system Magma~\cite{bosma1997magma} to carry out explicit orbit computations when possible, and group-theoretic arguments when direct computations are infeasible. These tools, while not sufficient to fully classify higher-dimensional cases in general, prove to be powerful enough to obtain a complete classification in small cases.

In particular, we extend the enumeration results of binary alternating algebras (and hence bibraces) to all structures of dimension at most eight, providing criteria for primitivity and rank-based invariants that help distinguish non-isomorphic cases. These results solve, in the range up to dimension eight, the problem of counting such structures mentioned in previous works.
At the same time, from the cryptographic perspective, our results clarify the landscape of possible alternative operations for differential attacks, showing that the range of admissible structures of dimensions four and eight (the dimensions most relevant for S-boxes) can be completely enumerated and understood. While it has long been known that there are 105 operations on 4-bit blocks, here we find that there are 117,833,335,446,015 operations on 8-bit blocks, corresponding to roughly 47 bits of entropy for the search of an alternative operation for differential cryptanalysis.\\

The paper is organised as follows. In Section~\ref{sec:preliminaries} we recall the necessary background on binary alternating algebras, their correspondence with binary bibraces, and the matrix representation of the induced bilinear maps. Section~\ref{sect:count} develops the classification techniques, introducing invariants such as rank sequences and the notion of primitivity, needed to obtain the complete classification up to dimension eight summarised in Table~\ref{tab:num}.

\section{Preliminaries and Background}
\label{sec:preliminaries}

This section collects the essential definitions, notation, and previous results on binary bibraces and binary alternating algebras that will be used in the sequel. All algebraic structures are considered over the field~$\mathbb{F}_2$. Here we follow the notation of a previous paper where bibraces were singled out as a particular construction and given a name~\cite{civino2025binary}. The classification results rely and are inspired by a broader line of research on more general structures, carried out by many authors over the years.~\cite{Ca05,CR09,aragona2019regular,CCS19,Ch19,CARANTI2020647,DC23}.
\begin{definition}\label{def:bibra}
A \emph{binary bibrace} is a triple $(R, +, \circ)$ where:
\begin{itemize}
    \item $(R, +)$ and $(R, \circ)$ are both elementary abelian $2$-groups (vector spaces over~$\mathbb{F}_2$);
    \item the operations satisfy the \emph{biskew brace} identities:
    \[
    (x+y)\circ z=x\circ z+z+y\circ z, 
    \quad x\circ y+z=(x+z)\circ z\circ(y+z)
    \]
    for all $x, y, z \in R$.
\end{itemize}
\end{definition}
The previous identities are equivalent to the \emph{dual normalisation} meaning that the group $T_{\circ}(R) = \{ t_x : y \mapsto y \circ x \mid x \in R \}$ of $\circ$-translations normalises the group $T_{+}(R)$ of $+$-translations and vice versa.

\begin{definition}\label{def:alg}
A \emph{binary alternating algebra} is an $\mathbb{F}_2$-algebra $(R,+,\cdot)$
satisfying
\begin{enumerate}
    \item \emph{alternating law:} $x \cdot x = 0$ for all $x \in R$;
    \item \emph{nilpotency of class two:} $x \cdot y \cdot z = 0$ for all $x, y, z \in R$.
\end{enumerate}
The \emph{annihilator} of $R$ is the subspace
$
\Ann(R) = \{ a \in R \mid a \cdot R  = 0 \}
$, while the \emph{square} is
$
R^2 = \langle x \cdot y \mid x, y \in R \, \rangle.
$
The nilpotency condition implies $R^2 \subseteq \Ann(R)$.
\end{definition}

%A binary alternating algebra is called \emph{primitive} if $R^2 = \Ann(R)$. Primitivity ensures that the product space has minimal dimension compatible with the structure.

The algebraic structure of Definitions~\ref{def:bibra} and~\ref{def:alg} are equivalent, meaning that, given a binary alternating algebra $(R, +, \cdot)$, the operation
\[
x \circ y = x + y + x \cdot y
\]
defines a binary bibrace on $R$. Conversely, any binary bibrace $(R,+,\circ)$ corresponds to a binary alternating algebra of nilpotency class two~\cite{civino2025binary}, equipped with the product defined by
\[x\cdot y=x+y+x\circ y.\] 
This equivalence implies that the classification of binary alternating algebras directly determines the possible bibrace structures with the same support, i.e.\ all alternative operations for differential cryptanalysis under the dual normalisation constraint.

\subsection{Representation via skew-symmetric matrices and bilinear maps}
Given a nondegenerate alternating bilinear map $\phi : V \times V \longrightarrow W$
over $\mathbb{F}_2$-vector spaces $V$ and $W$, it is possible to define a product on the direct sum $V\oplus W=R$
that equips it with the structure of a binary alternating algebra $(R,+,\cdot)$ such that $\Ann(R)=0_V\oplus W$, where
\[(x_1,y_1)\cdot(x_2,y_2)=(0,\phi(x_1,x_2))\]
for all $x_1,x_2\in V$, $y_1,y_2\in W$ (here \emph{alternating} means $\phi(x,x)=0$ for all $x\in V$).

Conversely, the product of a binary alternating algebra $(R,+,\cdot)$ induces an alternating
bilinear map
\[\widehat{\phi}: R\times R\longrightarrow \Ann(R),\quad \widehat{\phi}(x,y)=x\cdot y\]
and its restriction to a complement $V$ of the annihilator $W=\Ann(R)$ is a nondegenerate alternating bilinear map.

Let us consider the finite-dimensional case and let $(v_1,\dots,v_m,w_1,\dots,w_d)$ be an ordered basis of $R$, where $v_1, \dots , v_m$ and $w_1,\dots,w_d$ are bases of $V$ and $W$, respectively. Then the bilinear map $\widehat{\phi}$ defined over the entire space is determined by a sequence $(\widehat{B}_1,\dots,\widehat{B}_d)$ of $d$ binary square $(m+d)\times (m+d)$ matrices. More precisely, we can write
		\[\widehat{\phi}(x,y)=\phi_1(x,y)w_1+\dots+\phi_d(x,y)w_d,\]
		where 
		\[\phi_k: R\times R\longrightarrow \mathbb F_2,\quad \phi_k(x,y)=x\widehat{B}_ky^t\]
		is a bilinear form for $1\leq k\leq d$. Conversely, if $\phi_1,\dots,\phi_d$ are given, then the matrix $\widehat{B}_k$ is defined by
		\[\widehat{B}_k=\left[\begin{array}{c|c}\phi_k(v_i,v_j)&0\\\hline 0&0\end{array}\right],\quad 1 \leq i,j \leq m.\]
		We denote by $B_k$ the $m\times m$ submatrix of $\widehat{B}_k$ whose $(i,j)$ entry $B_k[i,j]$ is given by $v_i\widehat{B}_kv_j^t$ for each $1 \leq i,j \leq m$ and $1 \leq k \leq d$. Then the matrices $B_k$ are the $m\times m$ matrices associated to the bilinear map $\phi:V\times V\longrightarrow W$.
		
		It is proved that $B_1,\dots,B_d$ are \emph{skew-symmetric} (symmetric and zero-diagonal)~\cite{civino2025binary}. Moreover, it is shown that
		
		\begin{equation}\label{R2}
			\dim R^2=\dim\langle B_1,\dots,B_d\rangle.
		\end{equation}
		
Conversely, starting from a sequence $B_1,\dots,B_d$ of binary skew-symmetric matrices of size $m$, one can define a binary alternating algebra by endowing the vector space $\mathbb{F}_2^m\oplus \mathbb{F}_2^d$ with the product  \[(x_1,y_1)\cdot(x_2,y_2)=(0,(x_1B_1x_2^t,\dots,x_1B_dx_2^t)).\]
Ensuring the nondegeneracy of the bilinear map $\phi(x_1,x_2)=(x_1B_1x_2^t,\dots,x_1B_dx_2^t)$ requires only that the matrix formed by horizontally concatenating $B_1,...,B_d$ has a rank of $m$.
We remark that the nondegeneracy condition is equivalent to requiring that the annihilator of the associated algebra coincides with the subspace $0_m\oplus\mathbb{F}_2^d$.

\begin{definition}
Let $V=\langle v_1,\dots, v_m\rangle$ and $W=\langle w_1,\dots,w_d\rangle$ be two finite-dimensional vector spaces over $\mathbb F_2$, and let $R=V\oplus W$. Given a nondegenerate bilinear map $\phi:V\times V\longrightarrow W$, we denote by $\cdot_\phi$ the algebra product induced by $\phi$ over $R$ as 
\begin{align*}\label{product}
		(x_1,y_1)\cdot(x_2,y_2)=(0,\phi(x_1,x_2)).
	\end{align*}
 The matrices $B_1,\dots,B_d$ satisfying
\[\phi(x_1,x_2)=(x_1B_1x_2^t,\dots,x_1B_dx_2^t)\quad \text{for } x_1,x_2\in V\]
are called the \emph{defining matrices of $(R,+,\cdot_\phi)$}.
\end{definition}

The matrix space generated by the defining matrices of a binary alternating algebra is a subspace of the space
\[
\Lambda_m = \{ m \times m \ \text{skew-symmetric matrices over} \ \mathbb{F}_2 \}.
\]

To determine the isomorphism classes of small binary alternating algebras, we use the fact that two such algebras are isomorphic if and only if their corresponding $d$-dimensional subspaces
are in the same orbit under the action of $\mathrm{GL}(m, \mathbb{F}_2)$ by congruence. The classification of binary alternating algebras is equivalent to the classification of $\mathrm{GL}(m, \mathbb{F}_2)$-orbits of $d$-dimensional subspaces of $\Lambda_m$, which we address in the remainder of the paper.

	\section{Number of binary structures}\label{sect:count}
	
	Let $R$ be a vector space of dimension $m+d$ over $\mathbb{F}_2$ and let $W$ be a subspace of $R$ of dimension $d$. The number of binary alternating algebras that one can impose on the vector space $R$ such that $\Ann(R)=W$ is given by the number of the length-$d$ ordered sequences of skew-symmetric matrices over $\mathbb{F}_2$ whose horizontal concatenation has rank $m$~\cite{civino2025binary}, i.e.
	\[s_{m,d}=\#\left\{(B_1,\dots,B_d):\ B_i\in\Lambda_m,\ \rank\pmat{B_1,\dots,B_d}=m\right\}.\]
	Since 
	\[t_{m,d}=\frac{(2^{m+d}-1)(2^{m+d}-2)\dots(2^{m+d}-2^{d-1})}{(2^{d}-1)(2^{d}-2)\dots(2^{d}-2^{d-1})},\]
	i.e.\ the Gaussian binomial coefficient $\binom{m+d}{d}_2$,
	counts the number of $d$-dimensional subspaces of $R$, which corresponds to the possible choices for the annihilator of the algebra, the total number of such algebras is given by the product $s_{m,d} \cdot t_{m,d}$.
	In Table~\ref{tab:num}, we indicate $s_{m,d}$ and $t_{m,d}$ for each couple of parameters $(m,d)$ within the range $3\leq m+d=n\leq 8$.
	In the remainder of the paper, we show how we computed $s_{m,d}$ in the relevant cases. We recall that not all values of $d = \dim(\Ann(R))$ are admissible for a binary alternating algebra of a given total dimension. Indeed, by construction one always has $R^2 \subseteq \Ann(R)$, so that $1 \leq d \leq \dim(R)-2$, and additional restrictions arise from the alternating property of the bilinear map. In particular, when the total dimension of $R$ is even, the case 
$d=1$ cannot occur. 
As a consequence, some values of $d$ are forbidden and therefore do not appear in Table~\ref{tab:num}.
	
	\begin{table}
	\[
	\begin{matrix*}
		\hline
		n& m& d& \text{\# $d$-dimensional spaces}	& \text{\# operations over standard basis}\\
		\hline								        
		\hline								        
		3& 2& 1& 7 									&1				\\
		\hline								        
		\hline								        
		4& 2& 2& 35 								&3				\\
		\hline								        
		\hline								        
		5& 4& 1& 31 								&28				\\
		\hline
		5& 3& 2& 155 								&42				\\
		\hline
		5& 2& 3& 155 								&7				\\
		\hline								        
		\hline								        
		6& 4& 2& 651 								&3360				\\
		\hline
		6& 3& 3& 1395 								&462				\\
		\hline
		6& 2& 4& 651 								&15				\\
		\hline								        
		\hline								        
		7& 6& 1& 127 								&13888				\\
		\hline
		7& 5& 2& 2667 								&937440				\\
		\hline
		7& 4& 3& 11811 								&254968				\\
		\hline
		7& 3& 4& 11811 								&3990				\\
		\hline
		7& 2& 5& 2667 								&31				\\
		\hline								        
		\hline								        
		8& 6& 2& 10795 								&1012435200				\\
		\hline
		8& 5& 3& 97155 								&1065765120				\\
		\hline
		8& 4& 4& 200787 							&16716840				\\
		\hline
		8& 3& 5& 97155 								&32550				\\
		\hline
		8& 2& 6& 10795 								&63				\\
		\hline
	\end{matrix*}
	\]
		\caption{Number of small binary structures}
	\label{tab:num}
	\end{table}
	%
	
	\begin{comment}
		\begin{align*}
			\begin{matrix*}
				\hline
				$n$ &$m$&$d$							& \text{\# operations $\circ$}\\
				\hline						
				\hline						
				3	& 2 & 1 							&\\
				\hline				
				4	&2&2								&\\			
				\hline
				5&2&3									&\\						
				\hline						
				5	& 3 & 2 							&42\\ 
				\hline						
				5	& 4 & 1 							&28\\ 
				\hline						
				\hline						
				6	& 4 & 2 							&3360\\
				\hline						
				\hline						
				7	& 4	& 3 							&254968\\
				\hline						
				7	& 5 & 2								&937440\\
				\hline						
				7	& 6 & 1								&13888\\
				\hline						
				\hline						
				8	& 4 & 4								&16716840\\
				\hline						
				8	& 5 & 3								&1065765120\\
				\hline						
				8	& 6 & 2								&1012435200\\
				\hline
		\end{matrix*}\end{align*}
	\end{comment}
	
	%
	\subsection{Primitive binary alternating algebras}
	
	Let $(R,+,\cdot_\phi)$ be a binary alternating algebra of dimensional parameters $m$ and $d$. Assume that $R^2$ is a proper subspace of $\Ann(R)$, i.e. $\Ann(R)=R^2\oplus U$ for some nonzero subspace $U$ of dimension $1\leq k\leq d-1$. Then $U$ is a null subalgebra of $R$ and the quotient algebra $R/U$ is isomorphic to an algebra of dimensional parameters $m$ and $d-k$.
	
	Conversely, starting from an algebra with a defining sequence $[B_1,\dots,B_d]$ such that $\Ann(R)=R^2$, it is easy to construct an algebra of dimensional parameters $m$ and $d+k$, for example by extending the sequence with $k$ zero matrices
	\[[B_1,\dots,B_d,0,\dots,0].\]
	Therefore, for classification purposes, it is convenient to use $\dim(R^2)$ and $m$ as parameters. Since the dimension of the annihilator and the counting of operations are relevant in cryptographic applications, we distinguish the two classification approaches by introducing the following definition.

	\begin{definition}
		Let $(R,+,\cdot_\phi)$ be a binary alternating algebra, let $W=\Ann(R)$ and let $V$ be a complement of $W$. We say that $R$ is primitive if $\Ann(R/R^2)=0$ or, equivalently, $W=R^2$. 
	\end{definition}
In what follows, we denote by $E_{ij}$ the standard basis of $\Lambda_m$, i.e. for $1\leq i<j\leq m$,
	\[E_{ij}=e_{ij}+e_{ji},\]
	where $e_{ij}$ denotes an elementary matrix ($e_{ij}[i,j]=1$ and $e_{ij}[h,k]=0$ for $(i,j)\neq(h,k)$).

	\begin{proposition}
		Let $(R,+,\cdot_\phi)$ be a primitive binary alternating algebra, $R=V\oplus R^2$, $V\simeq \mathbb{F}_2^m$, $R^2\simeq \mathbb{F}_2^d$. The following statements hold:
		\begin{enumerate}
			\item $1+(m\mod 2)\leq \dim(R^2)\leq \dim(\Lambda_m)$;
			\item if $\dim(R^2)=1$, then the isomorphism class of $R$ is unique among binary alternating algebras with underlying vector space $\mathbb{F}_2^m\oplus\mathbb{F}_2^d$ ($m$ even);
			\item if $m=3$, then $R$ belongs to one of the two isomorphism class identified by the following sequences of matrices:
			\[(E_{12},E_{13},E_{23}),\quad (E_{12},E_{23}).\]
		\end{enumerate}
	\end{proposition}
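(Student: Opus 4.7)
The plan relies on the correspondence, recalled in Section~\ref{sec:preliminaries}, between isomorphism classes of binary alternating algebras with parameters $(m,d)$ and $\GL(m,\FF)$-orbits, under congruence $B\mapsto PBP^t$, of $d$-dimensional subspaces $U\subseteq\Lambda_m$ whose basis matrices concatenate horizontally to a matrix of rank $m$. Primitivity ($\Ann(R)=R^2$) together with~\eqref{R2} forces $\dim U=d=\dim R^2$, so the defining matrices must be linearly independent and one may work directly with $d$-dimensional subspaces.

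For (1) and (2), the argument reduces to the classical congruence classification of alternating bilinear forms, for which rank is a complete invariant. The upper bound $\dim R^2\le\dim\Lambda_m$ is immediate from $U\subseteq\Lambda_m$. If $m$ is odd and $d=1$, then $U=\langle B_1\rangle$ with $B_1$ required to have rank $m$ for nondegeneracy; but alternating forms over any field have even rank, contradicting $m$ odd. This gives the lower bound in (1). When $m$ is even and $d=1$, the same classical classification places the unique rank-$m$ alternating $m\times m$ matrix in a single congruence orbit, yielding a unique isomorphism class and proving (2).

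For (3), with $m=3$ we have $\dim\Lambda_3=3$, so (1) forces $d\in\{2,3\}$. The case $d=3$ forces $U=\Lambda_3$, represented by $(E_{12},E_{13},E_{23})$. For $d=2$, every nonzero $B\in\Lambda_3$ has rank $2$ and hence a one-dimensional kernel; I would then show that $B\mapsto v_B$, with $v_B$ a spanning vector of $\ker B$, extends to a linear isomorphism $\Lambda_3\to\FF^3$ intertwining the congruence action of $\GL(3,\FF)$ with its contragredient action on $\FF^3$, since a direct check gives $v_{PBP^t}=(P^{-1})^tv_B$. Because $\GL(3,\FF)$ acts transitively on the seven $2$-dimensional subspaces of $\FF^3$, the same is true of those of $\Lambda_3$. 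Finally, for any such $U=\langle B_1,B_2\rangle$ the three nonzero elements of $U$ have three distinct kernels, so the column spaces of $B_1$ and $B_2$ jointly span $\FF^3$ and nondegeneracy is automatic; a convenient representative is $(E_{12},E_{23})$.

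The step I expect to require most care is the kernel-vector identification in (3): verifying linearity and $\GL$-equivariance of $B\mapsto v_B$ by an explicit computation, and confirming that every $2$-dimensional subspace of $\Lambda_3$ yields a nondegenerate bilinear map (so no orbit needs to be excluded a posteriori). Items (1) and (2) are essentially reductions to the classical structure theory of alternating bilinear forms.
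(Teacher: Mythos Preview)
Your argument is correct. Items (1) and (2) match the paper's proof essentially verbatim: the upper bound comes from $U\subseteq\Lambda_m$ via~\eqref{R2}, the lower bound from the even-rank constraint on alternating matrices combined with the nondegeneracy requirement $\rank[B_1\mid\cdots\mid B_d]=m$, and uniqueness for $d=1$ from the fact that rank is a complete congruence invariant for skew-symmetric matrices.

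For item (3) you take a genuinely different route. The paper simply asserts that ``an easy check shows that the seven $2$-dimensional subspaces of $\Lambda_3$ are pairwise congruent'', i.e.\ a direct finite verification. You instead exhibit a $\GL(3,\FF)$-equivariant linear isomorphism $\Lambda_3\to\FF^3$ via $B\mapsto v_B$ (the unique nonzero vector of $\ker B$), intertwining congruence with the contragredient action, and then invoke transitivity of $\GL(3,\FF)$ on planes in $\FF^3$. This is the Hodge-star identification $\bigwedge^2(\FF^3)^{\!*}\cong\FF^3$, and it explains \emph{why} the direct check succeeds rather than merely confirming it; it also makes the nondegeneracy of every $2$-plane transparent, since distinct $B_1,B_2$ have distinct kernel lines and hence $[B_1\mid B_2]$ has trivial left kernel. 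The paper's approach is quicker here because there are only seven planes, but yours is more conceptual and would scale to analogous situations. Both are valid; your concern about verifying linearity of $B\mapsto v_B$ is easily discharged by checking it on the basis $E_{12},E_{13},E_{23}$ (mapping to $e_3,e_2,e_1$) and the remaining four nonzero elements.
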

	\begin{proof}
		Let $B_1,\dots,B_d\in\Lambda_m$ be the defining matrices of $(R,+,\cdot_\phi)$. By Eq.~\eqref{R2}, $\dim(R^2)=\dim\langle B_1,\dots,B_d\rangle\leq\dim(\Lambda_m)$. Since the bilinear map $\phi$ is nondegenerate, $\rank\left(\pmat{B_1&\dots&B_d}\right)=m$. Now, if $d=\dim(R^2)=1$, then $\rank(B_1)=m$ and so $m$ is even. On the other hand, if $m$ is odd,  $\rank(B)\leq m-1$ for each $B\in\Lambda_m$ and so $d\geq 2$. 
		
		The uniqueness of the isomorphism class in the case $d=1$ follows by the fact that any two skew-symmetric matrices of the same rank are congruent.
		
		Finally, for $m=3$, an easy check shows that the seven $2$-dimensional subspaces of $\Lambda_3$ are pairwise congruent. Thus, the only two isomorphism classes of primitive algebras are determined by a sequence of matrices that generates the entire space $\Lambda_3$ when $d=3$ or a $2$-dimensional subspace when $d=2$. 
	\end{proof}
Table~\ref{tab:class} shows the classification of binary alternating algebras with underlying vector space $\mathbb{F}_2^m\oplus\mathbb{F}_2^d$ up to dimension $n=m+d=8$. For each pair of parameters $m$ and $d$, we indicate the number of isomorphism classes of binary structures, or, equivalently, the number of congruence classes of $k$-dimensional subspaces of $\Lambda_m$, where $k\leq d$. Moreover, we indicate the isomorphism classes of primitive algebras (i.e. the number of congruence classes of $d$-dimensional subspaces of $\Lambda_m$).
	
	\begin{table}
	\begin{align*}
		\begin{matrix*}
			\hline
			$n$ &$m$&$d$					&\text{\# classes} & \text{\# primitive}			\\
			\hline								                 
			\hline								                 
			*	& 2 & \geqslant 1 			&1 &1												\\
			\hline								                 
			*    & 3& \geqslant 3 & 2		&2													\\
			\hline								                 
			\hline								                 
			5	& 3 & 2 					&1 &1												\\
			\hline								                 
			5	& 4 & 1 					& 1 &1												\\
			\hline								                 
			\hline								                 
			6	& 4 & 2 					& 4 &3												\\
			\hline								                 
			\hline								                 
			7	& 4	& 3 					& 9 &5												\\
			\hline								                 
			7	& 5 & 2						& 2 &2												\\
			\hline								                 
			7	& 6 & 1						& 1 &1												\\
			\hline								                 
			\hline								                 
			8	& 4 & 4						& 13&4												\\
			\hline								                 
			8	& 5 & 3						& 18&16												\\
			\hline								                 
			8	& 6 & 2						& 9 &8 												\\
			\hline
	\end{matrix*}\end{align*}
		\caption{Isomorphism classes in small cases}
	\label{tab:class}
	\end{table}
	
	This classification was obtained through direct computation, via Magma, of the congruence classes of the skew symmetric matrix subspaces. More precisely, we computed the action by congruence of $\gl(m,\mathbb{F}_2)$ on the set of $k$-dimensional subspaces, $k\leq d$.
	For the last case, $m=6,\ d=2$, we used a different approach because this type of direct computation would have required an excessive amount of time and virtual memory. Thus, we identified appropriate representatives of the congruence classes and calculated the cardinality of each class as the ratio between the order of $\gl(6,2)$, \[\#\gl(6,\mathbb{F}_2)=\prod_{j=0}^{5}{(2^6-2^k)}=20158709760,\] and the order of the \emph{self-congruence  group} of a space $\mathcal{B}\subseteq\Lambda_m$, which we will introduce in the following sections along with the details of the procedure. 	
	\subsection{Ranks criterion}\label{sec:ranks}
The tool we are about to introduce allows one to determine a non-isomorphism criterion for alternating algebras.

	\begin{definition}
 We call \emph{(ascendant) sequence of (matrix) ranks} of a $k$-dimensional matrix vector space $\mathcal{B}\subseteq \mathbb{F}_2^{m\times m}$ any sequence of length $2^k-1$ of the matrix ranks $(r_1,\dots,r_{2^k-1})$, where $r_i=\rank(B_i)$, $0\neq B_i\in \mathcal{B}$ and such that
	\[r_1\leq r_2\leq\dots\leq r_{2^k-1}.\] 
	\end{definition}
	
To introduce the ranks criterion, we need the following result.

\begin{theorem}[\cite{civino2025binary}]\label{congruent}
Let $(R,+,\cdot_\phi)$ and $(S,+,\cdot_\psi)$ be two nilpotent algebras of class two over $\mathbb F_2$, with underlying vector space $\mathbb F_b^m\oplus \mathbb F_2^d$ and defining matrices $B_1,\dots,B_d$ and $C_1,\dots, C_d$ respectively. Then   $R$ and $S$ are isomorphic algebras if and only if $\langle B_1,\dots,B_d\rangle$ and $\langle C_1,\dots, C_d\rangle$ are congruent (i.e.\ there exists an invertible matrix $A$ such that
		$A\langle C_j\rangle_jA^t=\langle B_i\rangle_i$).
\end{theorem}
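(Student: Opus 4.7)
The plan is to translate the algebraic condition ``$f$ is an isomorphism of algebras'' into the matrix-theoretic condition ``the defining subspaces are congruent'', by exploiting the block structure that $f$ is forced to have because the annihilator is preserved. Both directions reduce to a direct computation using the bilinear form description of the product.

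For the necessity direction, I would first observe that the annihilator $\Ann(R)$ is intrinsically defined by the algebra structure, so any isomorphism $f:R\to S$ must send $\Ann(R)$ to $\Ann(S)$. Since $\phi$ and $\psi$ are nondegenerate, these annihilators are exactly the subspaces $0\oplus \F_2^d$ on each side. Writing elements as row vectors $(x,y)$ with $x\in\F_2^m$, $y\in\F_2^d$, the map $f$ therefore has block form
\[
 f(x,y)\;=\;(xA,\; xN+yM),
\]
where $A\in\gl(m,\F_2)$ (the map induced on $R/\Ann(R)$), $M\in\gl(d,\F_2)$ (the restriction to the annihilator), and $N$ is an $m\times d$ matrix recording the ``shift'' of the chosen complement. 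The hypothesis that $f$ preserves the product becomes, after evaluating both sides on arbitrary $(x_1,y_1),(x_2,y_2)$,
\[
 \phi_R(x_1,x_2)\,M \;=\; \phi_S(x_1 A,\,x_2 A).
\]
Extracting the $k$-th coordinate and using $\phi_R(x_1,x_2)_j=x_1 B_j x_2^t$ and $\phi_S(u_1,u_2)_k=u_1 C_k u_2^t$, this identity reads, for every $x_1,x_2$,
\[
 x_1\Bigl(\sum_{j} M_{jk}B_j\Bigr)x_2^{\,t} \;=\; x_1\,A\,C_k\,A^t\,x_2^{\,t},
\]
hence $A C_k A^t=\sum_{j}M_{jk}B_j$ for each $k$. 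Because $M$ is invertible, this shows that $A\langle C_1,\dots,C_d\rangle A^t=\langle B_1,\dots,B_d\rangle$, which is the asserted congruence.

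For the sufficiency direction, given $A\in\gl(m,\F_2)$ realising the congruence, I would pick the unique invertible $M\in\gl(d,\F_2)$ whose columns express each $A C_k A^t$ as a linear combination of $B_1,\dots,B_d$, and define $f(x,y)=(xA,\,yM)$ (taking $N=0$ for simplicity). It is clear that $f$ is an $\F_2$-linear bijection; preservation of the product is then a one-line check running the computation above backwards, so $f$ is an algebra isomorphism.

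The argument is almost routine once the block form is in place, and the only real care needed is bookkeeping: justifying rigorously that the annihilator is characteristic (which uses nondegeneracy of $\phi$ and $\psi$ to identify $\Ann(R)$ with $W$), and keeping the row/column and transpose conventions consistent so that the translated condition matches the statement $A\langle C_j\rangle_j A^t=\langle B_i\rangle_i$ rather than its transposed variant. The likely main obstacle—though still minor—is verifying that the choice $N=0$ is admissible; this is fine here because the product $(x_1,y_1)\cdot(x_2,y_2)$ depends only on the $V$-components $x_1,x_2$, so any $N$ (including $0$) yields a valid isomorphism, and no compatibility between $N$ and the chosen complements needs to be enforced.
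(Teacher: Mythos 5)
The paper does not actually prove this theorem; it is imported verbatim from the reference \cite{civino2025binary}, so there is no internal proof to compare against. Your argument is the natural one, and the necessity direction is complete: the annihilator is characteristic and, by nondegeneracy of $\phi$ and $\psi$, equals $0\oplus\mathbb{F}_2^d$ on both sides, which forces the block-triangular form of $f$ with $A$ and $M$ invertible; evaluating the product identity on basis vectors then gives $AC_kA^t=\sum_j M_{jk}B_j$, and invertibility of $M$ upgrades the resulting inclusion $A\langle C_k\rangle_k A^t\subseteq\langle B_j\rangle_j$ to an equality of spans.

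The one genuine gap is in the sufficiency direction, in the phrase ``the unique invertible $M\in\gl(d,\mathbb{F}_2)$ whose columns express each $AC_kA^t$ as a linear combination of $B_1,\dots,B_d$.'' The defining matrices need not be linearly independent --- this paper leans on exactly that situation, e.g.\ the non-primitive algebras obtained by padding a defining sequence with zero matrices, and Theorem~\ref{ranks} is applied to such sequences --- and in that case such an $M$ is not unique, nor is it immediate that an invertible one exists. It does exist: the two linear maps $\mathbb{F}_2^d\to\Lambda_m$ given by $e_k\mapsto AC_kA^t$ and $e_j\mapsto B_j$ are surjections onto the \emph{same} subspace $\langle B_i\rangle_i$ (this is where the full equality $A\langle C_j\rangle_jA^t=\langle B_i\rangle_i$, rather than one inclusion, is used), hence have kernels of equal dimension, and any two surjections of $\mathbb{F}_2^d$ onto a fixed subspace differ by composition with an automorphism of $\mathbb{F}_2^d$; that automorphism is the required $M$. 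In the case where $B_1,\dots,B_d$ are independent your claim is literally correct, since then $\dim\langle C_j\rangle_j=d$ forces the $AC_kA^t$ to be independent and $M$ is the unique (necessarily invertible) change-of-coordinates matrix. With this small repair the proof is complete; the rest of the sufficiency check, including the observation that $N=0$ is admissible because the product only sees the $V$-components, is correct.
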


	\begin{theorem}\label{ranks}
		Let  $R=\mathbb{F}_2^m\oplus \mathbb{F}_2^d$, $(R,+,\cdot_\phi)$ and $(R,+,\cdot_\psi)$ be two nilpotent algebras of class two with defining matrices $B_1,\dots,B_d$ and $C_1,\dots,C_d$ respectively. If the matrix spaces $\langle B_i\rangle_i$ and $\langle C_i\rangle_i$ have different sequence of ranks, then the two algebras are not isomorphic.
	\end{theorem}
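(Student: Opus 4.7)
The plan is to prove the contrapositive: if the two algebras are isomorphic, then the two matrix spaces $\mathcal{B}=\langle B_1,\dots,B_d\rangle$ and $\mathcal{C}=\langle C_1,\dots,C_d\rangle$ must have the same (ascending) sequence of ranks. The statement then follows immediately.

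By Theorem~\ref{congruent}, an isomorphism of the two algebras yields an invertible matrix $A\in\gl(m,\mathbb{F}_2)$ such that $A\mathcal{C}A^t=\mathcal{B}$. The map $\Phi_A:\mathcal{C}\to\mathcal{B}$ defined by $\Phi_A(C)=ACA^t$ is $\mathbb{F}_2$-linear and, being invertible (with inverse $\Phi_{A^{-1}}$), restricts to a bijection between the sets of nonzero elements of $\mathcal{C}$ and $\mathcal{B}$. Since left- and right-multiplication by an invertible matrix preserves rank, one has $\rank(\Phi_A(C))=\rank(C)$ for every $C\in\mathcal{C}$. Therefore the multisets of ranks of nonzero elements of $\mathcal{B}$ and $\mathcal{C}$ coincide, and so do their ascending rearrangements, that is, the sequences of ranks in the sense of the definition above.

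The contrapositive then gives the statement: if the sequences of ranks of $\mathcal{B}$ and $\mathcal{C}$ differ, no such $A$ can exist, hence $\mathcal{B}$ and $\mathcal{C}$ are not congruent, and by Theorem~\ref{congruent} the algebras $(R,+,\cdot_\phi)$ and $(R,+,\cdot_\psi)$ are not isomorphic.

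There is no serious obstacle here; the only point to check carefully is that $\Phi_A$ is a bijection between the nonzero parts of $\mathcal{C}$ and $\mathcal{B}$ (so that the two spaces have exactly $2^d-1$ nonzero elements matched in rank-preserving fashion), which is clear from the invertibility of $A$ and the fact, guaranteed by Theorem~\ref{congruent}, that $A\mathcal{C}A^t$ equals $\mathcal{B}$ as a set rather than being merely contained in it.
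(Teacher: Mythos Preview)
Your proof is correct and follows essentially the same approach as the paper: both observe that the rank sequence of a matrix space is invariant under the congruence action $\mathcal{B}\mapsto A\mathcal{B}A^t$ and then invoke Theorem~\ref{congruent}. Your version simply spells out, via the bijection $\Phi_A$, why congruence preserves the multiset of ranks, whereas the paper asserts this in a single sentence; one small imprecision is that the number of nonzero elements is $2^{k}-1$ with $k=\dim\mathcal{B}$ (which may be strictly less than $d$), but this does not affect the argument.
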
	
	\begin{proof}
		The sequence of ranks of the space $\langle B_i\rangle_i$ is preserved under the action $A\langle B_i\rangle_iA^t$ for each $A\in\gl(m,\mathbb{F}_q)$. Therefore if $\langle B_i\rangle_i$ and $\langle C_i\rangle_i$ have different sequences of ranks, the two spaces are not congruent and the claim follows by Theorem \ref{congruent}.
	\end{proof}
	
	The following counterexample shows that, in general, two binary alternating algebras with the same sequence of ranks are not isomorphic.
	
	\begin{example}\label{example}
		Let us consider the following two $3$-dimensional matrix vector subspace of $\Lambda_4$, 
		\[\mathcal{B}=\langle E_{14},E_{23},E_{13}+E_{24}\rangle,\quad\mathcal{C}=\langle E_{12},E_{23},E_{13}+E_{24}\rangle.\]
		By a direct computation with Magma, we can verify that every $2$-dimensional subspace of $\mathcal{B}$, respectively of $\mathcal{C}$, has one of the following rank chains
		\[(2,2,4),\,(2,4,4),\,(4,4,4),\]
		respectively
		\[(2,2,2),\,(2,4,4).\]
		Now, for each $A\in\gl(4,\mathbb{F}_2)$, the space $A\langle E_{14},E_{23}\rangle A^t$ has rank chain $(2,2,4)$ and so it is not contained in $\mathcal{C}$. Thus, $\mathcal{B}$ is not congruent to $\mathcal{C}$. 
		However, both have the same rank chain as shown in Table~\ref{tab:ranks}.\\
		
		\begin{table}
		%\begin{center}
		\[\begin{tabular}{@{}ccc}
			$\rank$& $\mathcal{B}$					&$\mathcal{C}$\\
			\hline	\hline
			2 & $E_{14}$						& $E_{12}$\\
			\hline
			2 & $E_{23}$						& $E_{23}$\\
			\hline					
			2 & $E_{14}+E_{23}+E_{13}+E_{24}$                 & $E_{12}+E_{23}$\\
			\hline
			4 & $E_{13}+E_{24}$                 & $E_{13}+E_{24}$\\
			\hline
			4 & $E_{14}+E_{13}+E_{24}$          & $E_{12}+E_{13}+E_{24}$\\
			\hline
			4 & $E_{23}+E_{13}+E_{24}$          & $E_{23}+E_{13}+E_{24}$\\
			\hline
			4 & $E_{14}+E_{23}$   & $E_{12}+E_{23}+E_{13}+E_{24}$\\
			\hline
			
		\end{tabular}\]
		\caption{Ranks and subspaces of Example~\ref{example}}
		\label{tab:ranks}
		\end{table}
		%\end{center}
		
		We conclude this section by observing that the set of $k$-dimensional subspaces of $\Lambda_m$ identified by a fixed sequence of ranks is partitioned into congruence classes. Therefore, the partition induced by the rank sequences on the set of $k$-dimensional subspaces is a coarser partition than that induced by the congruence classes. However, it simplifies the identification of suitable representatives of the congruence classes when the number of spaces is very large.
	\end{example}
	
	\subsection{Binary alternating algebras with $2$-dimensional annihilator}
	
	In this section, we will address the case of $2$-dimensional spaces of $\Lambda_m$ with the aim of classifying binary alternating algebras of dimension $8$ with a $2$-dimensional annihilator. Indeed, given that the cardinality of $\gl(6,2)$ is very large, it is computationally infeasible to use the method used for previous cases, which involves directly calculating the group's action on the set of subspaces.
	
	For a given $2$-dimensional subspace $\mathcal{B}$ of $\Lambda_6$, the results presented here have allowed us to compute the cardinality of the congruence class of $\mathcal{B}$ as the ratio of $\#\gl(6,2)$ to the cardinality of the \emph{self-congruence group} of $\mathcal{B}$, which is defined as follows.
	
	\begin{definition}\label{selfcongruence}
		Let $\mathcal{B}\subseteq\Lambda_m$ be a subspace of skew symmetric matrices over $\mathbb{F}_2$. We define 
		\[\syc(\mathcal{B})=\{\,A:\, A\in\gl(m,\mathbb{F}_2)\ |\ A\mathcal{B} A^t=\mathcal{B}\,\}\]
		as the \emph{self-congruence group} of the space $\mathcal{B}$.
	\end{definition}
	
	Clearly, this method requires that determining $\#\syc(\mathcal{B})$ is computationally feasible and that it is possible to identify a set of representatives for the congruence classes. For this last requirement, we have used the rank criterion introduced in Section~\ref{sec:ranks}, along with the fact that the cardinalities of the congruence classes are distinct (in pairs) and that their sum coincides with the total number of $2$-dimensional spaces of $\Lambda_6$, namely
	\[\frac{(2^{15}-1)(2^{15}-2)}{6}.\]
	
We illustrate the results in Tables~\ref{tab:nondeg} and~\ref{tab:deg}. The first $8$ congruence classes contain $168732256$ subspaces. The horizontal concatenation of any two non zero matrices $B_1, B_2, B_3=B_1+B_2$ in each of these spaces gives rise to a rank $6$ matrix. Indeed, for each $i,j\in\{1,2,3\}$, $i\neq j$, \[\rank\pmat{B_i&B_j}=\rank\pmat{B_1&B_2&B_3},\]
	because the span of the columns of $B_3=B_1+B_2$, $C(B_1+B_2)$ is contained in the span of $C(B_1)\cup C(B_2)$. In particular, the bilinear map associated with $(B_i, B_j)$ is nondegenerate, and the congruence class is an isomorphism class of alternating binary algebras. Clearly, the number of these algebras corresponds to the number of ordered bases of each space, which is $6$ times the number of subspaces, i.e. $1012393536$. Now, the total number of algebras with a $2$-dimensional annihilator is given by \[1012393536+3\times13888=1012435200,\]
	where $13888$ is the number of skew symmetric matrices $B$ with a rank of $6$ and $3$ is the number of defining matrices sequences of length $2$ which yield a $1$-dimensional $R^2$ space, i.e. $(B,0), (0,B), (B,B)$.
	
	The subspaces contained in the last $6$ classes are related to degenerate bilinear maps, i.e. the horizontal concatenation of two matrices taken in any of these subspaces, has rank less than $6$.\\
	
		\begin{table}
	\[\begin{matrix}
		\hline
		\text{Class}&	\text{Ranks} & \text{Cardinality}\\
		\hline
		\hline
		C_1     &	$(6,6,6)$ 	 & 13332480\\
		\hline
		C_2     & $(4,6,6)$	 & 27998208\\
		\hline
		C_3     & $(4,6,6)$	 & 26248320\\
		\hline
		C_4     & $(2,6,6)$	 & 2187360\\
		\hline
		C_5     & $(4,4,6)$	 & 69995520\\
		\hline
		C_6     & $(2,4,6)$	 & 4666368\\
		\hline
		C_7     & $(4,4,4)$	 & 15554560\\
		\hline
		C_8     &	$(4,4,4)$	 & 8749440\\
		\hline
	\end{matrix}\]
	
	%\[\text{\small Spaces related to nondegenerate bilinear maps}\]
	\caption{Spaces related to nondegenerate bilinear maps}
	\label{tab:nondeg}
	\end{table}
	
	\begin{table}
	\[\begin{matrix*}
		\hline
		\text{Class}&	\text{Ranks}        &\text{Cardinality}\\
		\hline
		C_{9}&	$(4,4,4)$			&6562080\\
		\hline
		C_{10}&	$(4,4,4)$   		&36456\\
		\hline
		C_{11}&	$(2,4,4)$			&73728\\
		\hline
		C_{12}&	$(2,4,4)$   		&3072\\
		\hline
		C_{13}&	$(2,2,4)$			&182280\\
		\hline
		C_{14}&	$(2,2,2)$			&9765\\
		\hline
	\end{matrix*}\]
	\caption{Spaces related to degenerate bilinear maps}
	\label{tab:deg}
	%\[\text{\small Spaces related to degenerate bilinear maps}\]
	\end{table}

We conclude this section by illustrating the results used for the classification. Given a sequence $S$ of matrices defined over a field
of characteristic different from 2, the Magma function \textbf{IsometryGroup($S$)} returns the
isometry group of the system, which we have defined as the self-congruence group of Definition~\ref{selfcongruence}.
In characteristic 2, the algorithm for computing such group is not implemented in Magma; to overcome this, we relied on Proposition~\ref{classification}, which we are now going to prove. Here $\syp$ denotes the symplectic group. The following proposition shows that if two 2-dimensional spaces $\langle B_1,B_2\rangle$, $\langle C_1,C_2\rangle$ are congruent, then the two subgroup $\syp(B_1)\cap\syp(B_2)$ and $\syp(C_1)\cap\syp(C_2)$ are conjugated in $\gl(m,\mathbb{F}_2)$. Notice that $\syp(B_1)\cap\syp(B_2)<\syp(B_1+B_2)$, indeed for every $X\in \syp(B_1)\cap\syp(B_2)$,
	\[X(B_1+B_2)X^t=XB_1X^t+XB_2X^t=B_1+B_2.\]
	In particular, $\syp(B_1)\cap\syp(B_2)=\syp(B_1)\cap\syp(B_2)\cap\syp(B_1+B_2)$.
	
	\begin{proposition}\label{conjcongr}
		Let $\langle B_1,B_2\rangle,\langle C_1,C_2\rangle\in\Lambda_m$ be $2$-dimensional spaces of skew symmetric matrices over $\mathbb{F}_2$. If they are congruent, then $\syp(B_1)\cap\syp(B_2)$ is conjugate to $\syp(C_2)\cap\syp(C_2)$.
	\end{proposition}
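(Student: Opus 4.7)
The plan is to reduce the statement to the fact that the simultaneous symplectic group $\syp(B_1)\cap\syp(B_2)$ is really an invariant of the $2$-dimensional subspace $\langle B_1,B_2\rangle$ itself, not of the chosen basis, and then transport this invariant along the congruence $A$.

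First I would record the basis-independence: if $X$ satisfies $XB_1X^t=B_1$ and $XB_2X^t=B_2$, then for every $\alpha,\beta\in\mathbb F_2$ one has $X(\alpha B_1+\beta B_2)X^t=\alpha B_1+\beta B_2$, so
\[
\syp(B_1)\cap\syp(B_2)=\bigcap_{B\in\langle B_1,B_2\rangle}\syp(B).
\]
In particular, if $\langle B_1',B_2'\rangle=\langle B_1,B_2\rangle$, then $\syp(B_1')\cap\syp(B_2')=\syp(B_1)\cap\syp(B_2)$.

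Next, assume $A\in\gl(m,\mathbb F_2)$ realises the congruence, i.e.\ $A\langle C_1,C_2\rangle A^t=\langle B_1,B_2\rangle$. Then $AC_1A^t$ and $AC_2A^t$ form a basis of $\langle B_1,B_2\rangle$; call them $B_1',B_2'$. By the first paragraph, $\syp(B_1')\cap\syp(B_2')=\syp(B_1)\cap\syp(B_2)$. Now given $X\in\syp(C_1)\cap\syp(C_2)$, a direct computation using $(A^{-1})^t=(A^t)^{-1}$ gives, for $i=1,2$,
\[
(AXA^{-1})\,B_i'\,(AXA^{-1})^t
=AXA^{-1}\cdot AC_iA^t\cdot (A^{-1})^tX^tA^t
=A(XC_iX^t)A^t=AC_iA^t=B_i',
\]
so $AXA^{-1}\in\syp(B_1')\cap\syp(B_2')=\syp(B_1)\cap\syp(B_2)$. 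Applying the same argument to $A^{-1}$ (which realises the congruence in the opposite direction) gives the reverse inclusion, so conjugation by $A$ sends $\syp(C_1)\cap\syp(C_2)$ onto $\syp(B_1)\cap\syp(B_2)$.

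There is no genuine obstacle here: the only subtlety is not to confuse basis-congruence of the subspace with pointwise congruence of the generators $B_i\mapsto C_i$. Once one notes that passing from the original generators $AC_iA^t$ to any other basis of $\langle B_1,B_2\rangle$ does not change the joint stabiliser, the conjugation relation is just the chain of matrix identities above.
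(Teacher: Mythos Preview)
Your proof is correct and follows essentially the same line as the paper's: both conjugate by the matrix realising the congruence and use linearity to absorb the change of basis inside $\langle B_1,B_2\rangle$. The only cosmetic difference is that you isolate the basis-independence $\syp(B_1)\cap\syp(B_2)=\bigcap_{B\in\langle B_1,B_2\rangle}\syp(B)$ as a separate lemma before conjugating, whereas the paper handles it inline via the coefficients of $ZC_iZ^t$ in the basis $B_1,B_2$.
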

	\begin{proof}
		Assume that $\langle B_1,B_2\rangle,\langle C_1,C_2\rangle\in\Lambda_m$ are congruent. Then there exist $Z\in\gl(m,\mathbb{F}_2)$ such that
		\begin{align*}
			ZC_1Z^t=a_{11}B_1+a_{12}B_2,\\
			ZC_2Z^t=a_{21}B_1+a_{22}B_2,
		\end{align*}
		for some invertible matrix $A=\pmat{a_{11}&a_{12}\\a_{21}&a_{22}}$. Now, let $X\in \syp(B_1)\cap\syp(B_2)$. Then, for $i\in\{1,2\}$, 
		\[XZC_iZ^tX^t=X(a_{i1}B_1+a_{i2}B_2)X^t=a_{i1}B_1+a_{i2}B_2=ZC_iZ^t.\]
		Thus, $Z^{-1}XZ\in\syp(C_1)\cap\syp(C_2)$, i.e. $Z^{-1}(\syp(B_1)\cap\syp(B_2))Z\subseteq\syp(C_1)\cap\syp(C_2)$. The opposite inclusion follows by the symmetry of the congruence relation.
	\end{proof}
	
	\begin{remark}
		In the hypotheses of the previous proposition, we explicitly observe that
		given an invertible matrix $Z\in\gl(m,\mathbb{F}_2)$ and setting $G_B=\syp(B_1)\cap\syp(B_2)$, $G_C=\syp(C_1)\cap\syp(C_2)$, then
		\[Z\langle C_1,C_2\rangle Z^t=\langle B_1,B_2\rangle \quad\implies\quad Z^{-1}G_BZ=G_C.\]
	\end{remark}
	
	\begin{proposition}\label{classification}
		Let $\mathcal{B}=\langle B_1,B_2\rangle\subseteq\Lambda_m$ be a $2$-dimensional subspace of skew-symmetric matrices over $\mathbb{F}_2$, let $G=\syp(B_1)\cap\syp(B_2)$ and let $N=N_{\gl(m,\mathbb{F}_2)}(G)$. Then the following statements hold:
		\begin{enumerate}
			\item \label{item1}  $G\leq\syc(\mathcal{B})\leq N$ (in particular, $\syc(\mathcal{B})=\{A\in N\ |\ A\mathcal{B} A^t=\mathcal{B}\}$);
			\item  if $T$ is a right transversal for $G$ in $N$, and $S$ is the subset of $T$ formed by the matrices $A$ such that $A\mathcal{B}A^t=\mathcal{B}$, then $\syc(\mathcal{B})=\langle G\cup S\rangle$;
			\item $[\,\gl(m,\mathbb{F}_2)\,:\,\syc(\mathcal{B})\,]$ is the number of $2$-dimensional subspaces of $\Lambda_m$ congruent to $\mathcal{B}$.
		\end{enumerate}
	\end{proposition}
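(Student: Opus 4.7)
The plan is to prove the three items in order, using Proposition~\ref{conjcongr} (and the remark following it) as the main tool for item~\eqref{item1}, then a standard coset argument for item~(2), and the orbit--stabiliser theorem for item~(3).

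For item~\eqref{item1}, the inclusion $G\leq \syc(\mathcal{B})$ is immediate: any $X\in G$ satisfies $XB_1X^t=B_1$ and $XB_2X^t=B_2$, so $X$ fixes every element of $\mathcal{B}=\langle B_1,B_2\rangle$ and in particular stabilises $\mathcal{B}$ setwise. For the inclusion $\syc(\mathcal{B})\leq N$, I would take $A\in \syc(\mathcal{B})$ and apply Proposition~\ref{conjcongr} with $C_1=B_1$, $C_2=B_2$: since $A\mathcal{B}A^t=\mathcal{B}$, the matrix $A$ realises a congruence of $\langle B_1,B_2\rangle$ with itself, so the remark after Proposition~\ref{conjcongr} yields $A^{-1}GA=G$. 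Hence $A\in N_{\gl(m,\mathbb F_2)}(G)=N$. The parenthetical equality $\syc(\mathcal{B})=\{A\in N\mid A\mathcal{B}A^t=\mathcal{B}\}$ then follows, since $\syc(\mathcal{B})\leq N$ by what we just proved and the right-hand side is by definition contained in $\syc(\mathcal{B})$.

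For item~(2), the inclusion $\langle G\cup S\rangle\leq \syc(\mathcal{B})$ holds because both $G$ and $S$ consist of congruence-stabilisers of $\mathcal{B}$ by construction. Conversely, any $A\in \syc(\mathcal{B})$ lies in $N$ by item~\eqref{item1}, so we may write $A=g t$ with $g\in G$ and $t\in T$. Since $g\in G\leq \syc(\mathcal{B})$ fixes $\mathcal{B}$, the element $t=g^{-1}A$ also satisfies $t\mathcal{B}t^t=\mathcal{B}$, hence $t\in S$ by the definition of $S$. Thus $A\in \langle G\cup S\rangle$, and equality follows.

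For item~(3), I would consider the action of $\gl(m,\mathbb F_2)$ on the set of $2$-dimensional subspaces of $\Lambda_m$ by congruence, $A\cdot \mathcal{B}=A\mathcal{B}A^t$. The orbit of $\mathcal{B}$ is precisely the set of $2$-dimensional subspaces of $\Lambda_m$ congruent to $\mathcal{B}$, and the stabiliser of $\mathcal{B}$ is by definition $\syc(\mathcal{B})$. The orbit--stabiliser theorem then gives the stated index formula.

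I do not anticipate a serious obstacle: the only slightly subtle point is the passage from the setwise stabilising condition defining $\syc(\mathcal{B})$ to the pointwise condition defining $G$, but this is exactly what Proposition~\ref{conjcongr} delivers by being applied to a congruence of $\mathcal{B}$ with itself. Item~(2) is then the usual observation that if $H\leq K\leq N$ and $K$ is cut out inside $N$ by a condition that is trivial on~$H$, then $K$ is generated by $H$ together with those transversal representatives satisfying the condition, and item~(3) is just orbit--stabiliser.
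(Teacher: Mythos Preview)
Your proposal is correct and follows essentially the same approach as the paper: item~\eqref{item1} via Proposition~\ref{conjcongr}, item~(2) via a coset decomposition in $N$, and item~(3) via orbit--stabiliser. If anything, your argument for item~(2) is slightly more streamlined, since you write $A=gt$ with $t\in T$ directly and deduce $t\in S$, whereas the paper first shows that the $N$-component of $A$ lies in $\syc(\mathcal B)$ and then passes to its transversal representative.
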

	\begin{proof}
		Clearly $G\leq \syc(\mathcal{B})$. Let $Z\in\syc(\mathcal{B})$. By Proposition \ref{conjcongr}, $Z^{-1}GZ=G$ and so $Z\in N$.
		
		Now, it follows by construction that $\langle G\cup S\rangle\subseteq \syc(\mathcal{B})$. To prove the opposite inclusion, let us consider $Z\in \syc(\mathcal{B})$ and observe that, by (\ref{item1}), $Z\in N$. So $Z$ is contained in a right coset of $G$ in $N$, i.e. $Z=Z_GZ_N$ for some $Z_G\in G$ and $Z_N\in N$. Now, $\mathcal{B}=Z\mathcal{B}Z^t=Z_GZ_N\mathcal{B}Z_N^tZ_G^t$. This means that $\mathcal{B}=Z_G^{-1}\mathcal{B}(Z_G^{-1})^t=Z_N\mathcal{B}Z_N^t$. Thus, $Z_N\in GZ_S$ for some $Z_S\in S$, i.e. $Z=Z_G'Z_S$, $Z_G'\in G$.
Finally, it is clear that two invertible matrices, belonging to the same right coset of $\syc(\mathcal{B})$, yield the same matrix space by acting on $\mathcal{B}$. So the index of $\syc(\mathcal{B})$ in $\gl(m,\mathbb{F}_2)$ coincides with the number of spaces which are congruent to $\mathcal{B}$.
	\end{proof}
	
	Thanks to item~(\ref{item1}) of the previous proposition, it is possible to restrict the search for invertible matrices such that $A\mathcal{B}A^t=\mathcal{B}$ to the normaliser of $G$ in $\operatorname{GL}(m,\mathbb{F}_2)$. In particular, it suffices to check the self-congruence property for the matrices belonging to a system of representatives of the right cosets of $G$ in $N$. This is particularly convenient in the case $m=6$.
	
\section*{Statements and Declarations}
R.\ Civino is member of INdAM-GNSAGA and is supported by the Centre of EXcellence on Connected, Geo-Localized and 
 Cybersecure Vehicles (EX-Emerge), funded by Italian Government under CIPE resolution n.\ 70/2017 (Aug.\ 7, 2017). 
 R. Civino thankfully acknowledges support by MUR-Italy via PRIN 2022RFAZCJ `Algebraic methods in
cryptanalysis'.\\
The authors have no relevant financial or nonfinancial interests to disclose. R.~Civino and V.~Fedele wrote the main manuscript text. All
authors reviewed the manuscript. \\
Data availability statements: no datasets were generated or analyzed during the current study, and therefore no data are available to be shared. 

\bibliographystyle{amsalpha}
\bibliography{biblio}

%\appendix
%\section*{Appendix}
%\input{implementation}
%\addcontentsline{toc}{section}{Appendix}

\end{document}